\newtheorem{theorem}{Theorem}[section]
\numberwithin{equation}{section}
\newcommand{\eq}{\begin{equation}\begin{array}{lclllllllllllllll}}
\newcommand{\ee}{\end{array}\end{equation}}
\renewcommand{\thefootnote}{\arabic{footnote}}
\newcommand{\bmt}{\left[ \begin{array}{ccccccccc}}
\newcommand{\emt}{\end{array}\right]}
\newcommand{\bea}{\begin{eqnarray}}
\newcommand{\eea}{\end{eqnarray}}
\newcommand{\bean}{\begin{eqnarray*}}
\newcommand{\eean}{\end{eqnarray*}}
\newcommand{\MA}{Monge-Amp\`ere\xspace}
\newcommand{\IR}{{\mathbb{R}}}
\begin{document}

\title{A multigrid scheme for 3D Monge-Amp\`ere equations
\thanks{Project was supported in part by Guangdong Province Science and Technology Development Grant (foreign cooperation project: 2013B051000075) of China, and in part by NSF 1021203, 1419028  of the United States.}
}

 \author{
 Jun Liu \footnotemark[2]     \footnotemark[6]    
  \and
  Brittany D. Froese \footnotemark[3]
 \and
Adam M. Oberman  \footnotemark[4]  \footnotemark[6]    
\and
Mingqing Xiao \footnotemark[5]
 }

\date{\today }

\maketitle

\renewcommand{\thefootnote}{\fnsymbol{footnote}}
\footnotetext[6]{The work described in this article is a result of a collaboration made possible by the IMA's New Directions Short Course on ``Topics on Control Theory: Optimal Mass Transportation'' in June, 2014.
}
\footnotetext[2]{Corresponding author. Department of Mathematics and Statistical Sciences, Jackson State University, Jackson, MS 39217, USA.
({\tt jun.liu@jsums.edu})}
\footnotetext[3]{Department of Mathematical Sciences, New Jersey Institute of Technology, University Heights, Newark, NJ  07102, USA. 
(\tt bdfroese@njit.edu)}
\footnotetext[4]{Department of Mathematics and Statistics, McGill University, Montreal, QC H3A 0B9, Canada. 
({\tt adam.oberman@mcgill.ca})}
\footnotetext[5]{Department of Mathematics, Southern Illinois University, Carbondale, IL 62901, USA. 
({\tt mxiao@siu.edu})}

\renewcommand{\thefootnote}{\arabic{footnote}}
\begin{abstract}
The elliptic Monge-Amp\`ere equation 
is a fully nonlinear partial differential equation which  has been the focus of increasing attention from the scientific computing community.  Fast three dimensional solvers are needed, for example in medical image registration
but are not yet available. We build fast solvers for smooth solutions in three dimensions using a nonlinear full-approximation storage multigrid method. Starting from a second-order accurate centered finite difference approximation, 
we present a nonlinear Gauss-Seidel iterative method which has a mechanism for selecting the convex solution of the equation.  The iterative method is used as an effective smoother,  combined with the full-approximation storage multigrid method. 
Numerical experiments are provided to validate the accuracy of the finite difference scheme and illustrate the computational efficiency of the proposed multigrid solver.   
\end{abstract}

\providecommand{\keywords}[1]{\textbf{Keywords:} #1}
\keywords{Monge-Amp\`ere equation; nonlinear Partial Differential Equations; finite difference method; Gauss-Seidel iteration; FAS multigrid method.}

\section{Introduction}\label{sec1}
The elliptic Monge-Amp\`ere equation 
\[
\begin{cases}
 \det(D^2 u(x))=f(x), & x\in\Omega\subset\IR^d\\
 u \text{ is convex}
\end{cases}
\]
with $f>0$, where $D^2 u$ is the Hessian of the function $u$,
is a fully nonlinear partial differential equation (PDE)
which  has been the focus of increasing attention from the scientific computing community~\cite{Tadmor2012,Feng2013}.   It is the prototypical fully nonlinear elliptic PDE, with connections to differential geometry~\cite{trudinger2008monge}.  As tools for solving the equation have become more effective, the PDE has appeared in more applications, including the reflector design problem~\cite{Prins2014}, the optimal transportation problem~\cite{Benamou2014}, and parameter identification problems in seismic signals~\cite{engquist2014application}.  
In this paper, we focus on building fast solvers for the PDE in the three dimensional case.  
Our interest in this case stems   from the image registration problem~\cite{haker2004optimal,haber2010efficient,haber20093d,haber2007image},
where current imaging resolution requires more effective solvers than are currently available.

There are two regimes to consider when solving the problem. In the regime where solutions are smooth, many approaches can lead to numerical  methods which converge in practice.
These include finite differences~\cite{DelzannoChacon,Loeper2005,Sulman2011},  finite elements~\cite{Brenner2011,Brenner2012,feng2009vanishing,Neilan2013}, spectral methods~\cite{saumier2013efficient}, and Fourier integral formulations~\cite{zheligovsky2010monge}.

In the second regime, where solutions are singular, standard methods either break down completely or become extremely slow~\cite{Benamou2010}.   
In order to compute singular solutions, the theory of viscosity solutions can be used to build  convergent monotone finite difference methods~\cite{BSNum}.   Monotone schemes require using wide stencils, which have a directional resolution parameter, in addition to a spatial discretization parameter.  As a result, they are generally less accurate {than} second order in space.  On the other hand, they are stable in the maximum norm, and can be solved by a (slow) forward Euler method, with an explicit CFL condition~\cite{Oberman2008}.     Refinements of the monotone method result in faster solvers~\cite{ObermanFroeseFast} and higher accuracy~\cite{ObermanFroeseFiltered}.  
The fast solvers employed were an exact Newton's method (with an explicitly computed Jacobian) and a direct linear solver in two dimensions~\cite{ObermanFroeseFiltered}.   The initial guess for Newton's method used one step of an iterative solver from~\cite{Benamou2010}, which also required a single solution of a Poisson equation.  Computations were performed in three dimensions, but the direct linear solvers were too costly for larger sized problems.    

A significant challenge in computing solutions of the \MA equation is the need to enforce an additional constraint that the solution be convex; without this, the solution is not unique.  In two-dimensions, for example, the \MA equation will typically have two solutions: one convex and one concave.  The situation becomes more complicated at the discrete level.  For a two-dimensional problem with $N$ discretization points, a finite difference scheme will typically admit $2^N$ different solutions~\cite{Feng2013}.  In three-dimensions, the problem becomes still more complicated.  In two-{dimensions}, the correct solution can be selected by choosing the solution that has a positive Laplacian.  This is equivalent to selecting the smallest root of a quadratic equation.  {This structure was exploited by the Gauss-Seidel and iterative Poisson methods developed in~\cite{Benamou2010}.  However, in three-dimensions, positivity of the Laplacian (sum of the eigenvalues of the Hessian) and the \MA operator (product of the eigenvalues) is not enough to guarantee convexity (positivity of all three eigenvalues).}  Using  monotone schemes, convexity can be enforced by separately computing the positive and negative parts of various second directional derivatives.


In this paper, we focus on fast solution methods for the three-dimensional \MA operator,
with a given source function which is positive and continuous in a convex domain $\Omega\subset \IR^3$,
\begin{equation}\label{MA3D}
\det (D^2u(x,y,z))=f(x,y,z) > 0, \quad \text{for $(x,y,z)$ in } \Omega
\end{equation}
with a given Dirichlet boundary condition
\begin{equation}\label{DBC}
 u =g ,\quad \text{on} \quad \partial\Omega
\end{equation}
and the additional global constraint 
\[u \text{ is convex},\]
 which is necessary for ellipticity and uniqueness.
We record the the explicit form of the operator in three dimensions as follows
 \[
\det (D^2u ) = u_{xx}u_{yy}u_{zz}+2u_{xy}u_{yz}u_{xz}-u_{xx}(u_{yz})^2-u_{yy}(u_{xz})^2-u_{zz}(u_{xy})^2.
\]
We note that the convexity constraint is formally equivalent to the condition that the Hessian of the solution is positive definite,
\[ D^2u > 0. \]

We assume also that that boundary data $g$ is consistent with the restriction of a convex function to the boundary.
These conditions on the data, along with strict convexity of the domain, are enough to ensure that solutions are classical (twice continuously differentiable)~\cite{CaffarelliNirenbergSpruckMA}.  For simplicity of computations, we work with square domains, which allow for the possibility of singular solutions, however, using known solutions avoids this problem.

Without additional  difficulties, the method developed in this paper can also be applied to the two-dimensional problem
\begin{equation}\label{MA2D}
u_{xx}u_{yy}-(u_{xy})^2=f(x,y)\quad \text{in} \quad \Omega.
\end{equation}

The objective of this work is to build fast solvers for the three-dimensional elliptic Monge-Ampere equation.  We focus on the Dirichlet problem in the smooth solution regime, where the simple narrow stencil finite difference methods can be used.   
The goal is to build an effective, fast, scalable solver that can be used in applications.
It is not trivial to build a  multigrid solver in this setting--naive implementations of multigrid will fail.
For example, the commonly suggested strategy~\cite{Trottenberg2001} of performing a few
Newton iterations to approximately solve the corresponding nonlinear scalar equations at each grid point would not lead to a convergent FAS
multigrid solver for our problem. This is because such a method has no mechanism for selecting the correct, convex solutions.
We introduce such a selection {mechanism}, which leads to an effective multigrid method.  
However, when the equation becomes degenerate (corresponding to non-strictly convex solutions, which can arise when the right hand side $f(x) = 0$), the method presented here breaks down.  This is likely related to the loss of uniform ellipticity of the equation, and the loss of strict convexity of the solution~\cite{Benamou2010}.  {In this setting, wide-stencil finite difference methods are typically needed to guarantee convergence to weak solutions~\cite{Froese2011,Motzkin}.}  A possible work around is to combine monotone (or filtered) schemes with a multi grid method.   For now, we restrict our attention to smooth solutions, where the method presented in this paper is effective.

One approach is to use Newton's method  with an iterative solver~\cite{Ortega2000}.
For example,  the Newton-Multigrid method arises when  a 
linear multigrid algorithm is used to approximately solve the linearized Jacobian system.  A disadvantage of Newton's method is that it requires a good initial guess, which should typically be convex~\cite{Loeper2005}.
We instead choose to treat the non-linearity directly using the linear multigrid framework and coarse-grid correction,
which leads to the most common nonlinear multigrid method---the full approximation scheme (FAS)~\cite{Brandt1977,Brandt2011multigrid,Briggs2000,Trottenberg2001,Saad2003}. 
For the FAS multigrid to be effective as an iterative method, it requires an effective
smoother (or relaxation) scheme such as a nonlinear Gauss-Seidel iteration, which must eliminate
the high frequency components of the approximation errors at the current fine level.
Once the FAS V-cycle iteration is established, the full multigrid (FMG) technique, 
based on nested iterations, can be exploited to obtain a good initial guess for the fine-grid problem.

In this work we develop a FMG-FAS multigrid algorithm for the 3D Monge-Amp\`ere equation.  The method is based on a nonlinear Gauss-Seidel iteration that includes a mechanism for selecting the convex solution.
As a noticeable advantage over the above mentioned Newton-Multigrid method,
the FMG-FAS multigrid algorithm usually has better global convergence properties.  This is particularly the case for the \MA equation since the Newton-Multigrid approach does not have any way to select the correct, convex solution.
The FMG-FAS multigrid typically has a lower memory requirement as well since there is no need to compute and store the Jacobian matrices~\cite{Trottenberg2001}.

There are very few published works devoted to designing fast nonlinear solvers, particularly for larger problem sizes in three dimensions. 
The multigrid method was used as a preconditioner for a Newton-Krylov iteration in~\cite{DelzannoChacon,delzanno2008optimal} in two and three dimensions up to grids of size $128^3$, which took approximately five minutes using eight processors.  For the applications considered in that work, solutions of the \MA equation would typically be close to $u = |x|^2/2$, which led naturally to a good initialization of Newton's method.
In this paper, we approach the problem using the nonlinear full approximation scheme (FAS) multigrid method, which will not require an accurate initialization.
Although the FAS multigrid methods are popular and widely used in many disciplines involving nonlinear PDEs,
such as variational image registration models~\cite{Chumchob2011},  
the only references we found on applying FAS multigrid method to the Monge-Amp\`ere equation were two-dimensional simulations in a master's 
thesis~\cite{Ashmeen2011}. Besides, an adaptive FAS multigrid algorithm based on a continuation method was developed in
~\cite{Chen2010} for solving a 2D balanced vortex model.  
It is worthwhile to point out that the reported computational CPU time does not scale linearly with respect to the degrees of 
freedom (see e.g. Table 5.1 in~\cite{Ashmeen2011}),
which motivates us to look further into the efficiency of the FAS multigrid implementations.
In addition, we are mainly interested in solving three dimensional (3D) Monge-Amp\`ere equation,
which is well-known to be much more difficult~\cite{Brenner2012}.


This paper is organized as follows.
In the next section, we present a discretization of the PDE using second-order centered finite differences and propose a nonlinear Gauss-Seidel iterative method for solving the discretized system.   The method extends the two-dimensional Gauss-Seidel method of~\cite{Benamou2010}; in our case, we need to solve a cubic rather than a quadratic equation.
In Section~\ref{secFAS}, we introduce a full multigrid method based on the FAS V-cycle algorithm, where the nonlinear Gauss-Seidel iterations function as a smoother.
In Section~\ref{secNum}, numerical results for several two- and three-dimensional examples
are reported, which demonstrate the accuracy, mesh independent convergence, and linear time complexity of our proposed FMG-FAS multigrid solver.
Finally, {the paper ends with several concluding remarks} in Section~\ref{secFinal}.

\section{A nonlinear Gauss-Seidel iteration in 3D}
\label{secGS}
This section is motivated by the two-dimensional explicit finite difference method in~\cite{Benamou2010}
where second derivatives are discretized using standard centered differences on a uniform 
Cartesian grid. {There} a Gauss-Seidel iteration is constructed by selecting the smaller root
of a point-wise defined quadratic equation over each grid point. 
The resulting method is {formally} second order accurate provided the solution is regular enough.
In the following, we will develop a similar Gauss-Seidel iteration for the three-dimensional case,
where we have to handle a cubic equation at each grid point.

Let $N>0$ be a positive integer. We discretize the space domain $\overline\Omega=[0,1]^3$ uniformly into 
\[
 \Omega_h=\left\{(x_i,y_j,z_k) \mid  x_i=ih, y_j=jh, z_k=kh;\  i,j,k = 1,\dots, N \right\}
\]
with a uniform mesh step size $h=1/N$.
Let $u_{i,j,k}$ be the discrete approximation of $u(x_i,y_j,z_k)$.
Notice that the values of $u_{i,j,k}$ with $i\in\{0,N\}$ or $j\in\{0,N\}$ or $k\in\{0,N\}$ are
directly specified by the Dirichlet boundary conditions. 
Hence, we only need to set up finite difference approximations for all the interior grid points.
We employ the second-order accurate centered finite difference approximations~\cite{LeVeque2007} for all the
second-order derivatives as follows.
\begin{align}
 u_{xx}^h(x_i,y_j,z_k)&= \frac{1}{h^2}\left( {u_{i+1,j,k}+u_{i-1,j,k}} -2u_{i,j,k} \right),\\
 u_{yy}^h(x_i,y_j,z_k)&= \frac{1}{h^2}\left( {u_{i,j+1,k}+u_{i,j-1,k}} -2u_{i,j,k} \right),\\
 u_{zz}^h(x_i,y_j,z_k)&= \frac{1}{h^2}\left( {u_{i,j,k+1}+u_{i,j,k-1}} -2u_{i,j,k} \right),\\
 u_{xy}^h(x_i,y_j,z_k)&= \frac{1}{4h^2}\left(u_{i+1,j+1,k}+u_{i-1,j-1,k}-u_{i-1,j+1,k}-u_{i+1,j-1,k} \right),\\
 u_{xz}^h(x_i,y_j,z_k)&= \frac{1}{4h^2}\left(u_{i+1,j,k+1}+u_{i-1,j,k-1}-u_{i-1,j,k+1}-u_{i+1,j,k-1} \right),\\
 u_{yz}^h(x_i,y_j,z_k)&= \frac{1}{4h^2}\left(u_{i,j+1,k+1}+u_{i,j-1,k-1}-u_{i,j+1,k-1}-u_{i,j-1,k+1} \right).
\end{align}
Using these approximations, we can construct the discrete Hessian
\[ D^2u^h = \left(\begin{tabular}{ccc}$u_{xx}^h$ & $u_{xy}^h$  & $u_{xz}^h$\\ $u_{xy}^h$ & $u_{yy}^h$ & $u_{yz}^h$\\ $u_{xz}^h$ & $u_{yz}^h$ & $u_{zz}^h$\end{tabular}\right). \]
Then the discrete form of the elliptic \MA equation in the interior of the domain reads
\begin{equation}\label{eq:MA_discrete}
\begin{cases}
\det(D^2u^h_{i,j,k}) = f_{i,j,k}\\
D^2u^h_{i,j,k} > 0.
\end{cases}
\end{equation}
Inserting the approximations into equation~\eqref{eq:MA_discrete} at each interior grid point $(x_i,y_j,z_k)$ leads to
a cubic polynomial $P_3(u_{i,j,k};u)$ with respect to $u_{i,j,k}$
\begin{align}\label{MA3Dcubic}
 P_3(u_{i,j,k};u):=&-8u_{i,j,k}^3+4(a+b+c) u_{i,j,k}^2+2(r^2+s^2+t^2-ab-ac-bc)u_{i,j,k} \nonumber \\
 &\qquad +(abc-at^2-bs^2-cr^2+2rst-h^6f_{i,j,k})=0,
\end{align}
 where we use the notation
 \begin{align*}
  a&=({u_{i+1,j,k}+u_{i-1,j,k}}),\\
  b&=({u_{i,j+1,k}+u_{i,j-1,k}}),\\
  c&=({u_{i,j,k+1}+u_{i,j,k-1}}),\\
  r&=\frac{1}{4}\left(u_{i+1,j+1,k}+u_{i-1,j-1,k}-u_{i-1,j+1,k}-u_{i+1,j-1,k} \right),\\
  s&=\frac{1}{4}\left(u_{i+1,j,k+1}+u_{i-1,j,k-1}-u_{i-1,j,k+1}-u_{i+1,j,k-1} \right),\\
  t&=\frac{1}{4}\left(u_{i,j+1,k+1}+u_{i,j-1,k-1}-u_{i,j+1,k-1}-u_{i,j-1,k+1} \right),
 \end{align*}
and $f_{i,j,k}=f(x_i,y_j,z_k)$.

The above finite difference approximations (\ref{MA3Dcubic}) are intentionally formulated as a cubic equation
of $u_{i,j,k}$ at the current node $(x_i,y_j,z_k)$ by assuming its neighboring nodes are fixed.
The nonlinear Gauss-Seidel iteration follows from iteratively solving the cubic equation sequentially
over all grid points in a lexicographic order.
The so-called red-black ordering with better smoothing and parallel properties
is usually recommended for 2D problems, but it is of limited advantage  \cite{Zhang1998} to our 3D scheme
involving a total of 19 points. 
In this case, four or more colors are necessary to fully parallelize the updating order~\cite{Gupta2000},
which is not further pursued in the current paper.
The Dirichlet boundary conditions (\ref{DBC})
are enforced at the boundary grid points during the iteration.
However, we also need to determine which root of the cubic is to be selected
in order to ensure local convexity of the discrete solutions.
In the 2D case, the corresponding approach leads to a quadratic equation~\cite{Benamou2010}, 
and selection of the smallest root yields the convex solution.
This rule is not directly applicable for the cubic equation arising in 3D, which may have either one or three real roots.  As we are only interested in real-valued solutions, we simply select the smallest of all the real roots; that is
\begin{align}\label{realroots}
 u^{n+1}_{i,j,k}=\min\Big\{\mathrm{real\ roots\ of}\ P_3(\cdot\,;u^n)\Big\}.
\end{align}
These roots can either be computed exactly or approximately using a root-finding algorithm.

{Fixed points of this scheme are equivalent to solutions of the \MA equation~\eqref{eq:MA_discrete}, which is justified in Theorems~\ref{thm:root}-\ref{thm:root2}.  In particular, we emphasize that the fixed point of this scheme is guaranteed to be discretely convex.  This provides a clear advantage over Newton's method, which must be coupled to a sufficiently accurate initial guess in order to prevent convergence to an incorrect, non-convex solution.}

\begin{theorem}\label{thm:root}
Let $u$ be a solution of the discrete \MA equation~\eqref{eq:MA_discrete}.  Then $u$ is a fixed point of~\eqref{realroots}.
\end{theorem}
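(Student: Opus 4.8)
The plan is to reduce the statement to an elementary sign analysis of the cubic $P_3(\cdot\,;u)$ expressed through the eigenvalues of the discrete Hessian. First observe that the substantive claim is \emph{not} that $u_{i,j,k}$ is a root of $P_3(\cdot\,;u)$: that is immediate, since at an interior node equation~\eqref{MA3Dcubic} is by construction nothing but $h^6\big(\det(D^2u^h_{i,j,k})-f_{i,j,k}\big)=0$, which holds because $u$ solves~\eqref{eq:MA_discrete}. The real content is that $u_{i,j,k}$ is the \emph{smallest} real root of $P_3(\cdot\,;u)$, and I expect this to come entirely from the positivity constraint $D^2u^h_{i,j,k}>0$.

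Concretely, I would fix an interior node, freeze all neighboring values, and write $w$ for the unknown value at the center. The $h^2$-scaled discrete Hessian is then $Q(w)=Q(0)-2wI$, where $Q(0)$ is the fixed real symmetric matrix with diagonal $(a,b,c)$ and off-diagonal entries $r,s,t$ as in~\eqref{MA3Dcubic}. A short computation — essentially re-deriving~\eqref{MA3Dcubic} — gives $P_3(w;u)=\det Q(w)-h^6 f_{i,j,k}$. Diagonalizing $Q(0)$ with eigenvalues $\lambda_1\le\lambda_2\le\lambda_3$ yields
\[
 P_3(w;u)=\prod_{\ell=1}^3(\lambda_\ell-2w)-h^6 f_{i,j,k},
\]
so the real roots of $P_3(\cdot\,;u)$ are exactly the real solutions of $g(w):=\prod_{\ell=1}^3(\lambda_\ell-2w)=h^6 f_{i,j,k}=:c$, and $c>0$ because $f>0$.

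Now I analyze $g$, a cubic in $w$ with negative leading coefficient and roots $\lambda_1/2\le\lambda_2/2\le\lambda_3/2$. Two facts suffice: (i) on $(-\infty,\lambda_1/2)$ the function $g$ is strictly decreasing from $+\infty$ to $0$ — no critical point lies in this interval because the critical points of a cubic interlace its roots — so $g(w)=c$ has a \emph{unique} solution $w^\star$ there; and (ii) a sign count of the three linear factors shows $g\le 0$ on $[\lambda_1/2,\lambda_2/2]\cup[\lambda_3/2,\infty)$ and $g\ge 0$ only on $[\lambda_2/2,\lambda_3/2]$, so every real root of $g=c$ other than $w^\star$ lies in $[\lambda_2/2,\lambda_3/2]$ and is therefore $\ge\lambda_1/2>w^\star$. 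Hence $w^\star$ is the smallest real root of $P_3(\cdot\,;u)$. To finish, the constraint $D^2u^h_{i,j,k}>0$ says the eigenvalues $(\lambda_\ell-2u_{i,j,k})/h^2$ of the actual discrete Hessian are all positive, i.e. $u_{i,j,k}<\lambda_\ell/2$ for every $\ell$, hence $u_{i,j,k}<\lambda_1/2$; since $u_{i,j,k}$ is a root of $g=c$ lying in $(-\infty,\lambda_1/2)$, the uniqueness in (i) forces $u_{i,j,k}=w^\star=\min\{\text{real roots of }P_3(\cdot\,;u)\}$. As boundary nodes are held fixed by the Dirichlet data, this shows $u$ is unchanged by~\eqref{realroots}, i.e. it is a fixed point.

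The main obstacle is the bookkeeping in the degenerate configurations: when $Q(0)$ has repeated eigenvalues the phrases ``strictly decreasing'' and ``the critical points interlace the roots'' must be restated (but $g(w)\to+\infty$ as $w\to-\infty$ together with $g(\lambda_1/2)=0<c$ still forces a root below $\lambda_1/2$, and the factor-sign count still confines any other real root to $[\lambda_2/2,\lambda_3/2]$), and one should check that the argument handles the one-real-root case (negative discriminant) and the three-real-root case uniformly — which it does, since in both cases $w^\star$ is below every $\lambda_\ell/2$. The only remaining item is the identity $P_3(w;u)=\det Q(w)-h^6 f_{i,j,k}$, which is the routine expansion already leading to~\eqref{MA3Dcubic}.
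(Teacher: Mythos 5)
Your proof is correct, and while it rests on the same structural observation as the paper's --- that with the neighbors frozen, the $h^2$-scaled discrete Hessian is the affine pencil $Q(0)-2wI$, so varying the center value only shifts the spectrum --- you exploit that observation along a genuinely different route. The paper works directly with the (at most three) real roots $v_1\le v_2\le v_3$ of the cubic: it shows the eigenvalues of the discrete Hessian are decreasing functions of the root, deduces that the smallest root must be the positive-definite one, and rules out a second positive-definite root by a contradiction on products of eigenvalues (both products would have to equal $f$). You instead factor the cubic as $\prod_{\ell}(\lambda_\ell-2w)=h^6 f_{i,j,k}$ through the eigenvalues of the frozen neighbor matrix $Q(0)$ and run an elementary sign-and-monotonicity analysis: the convexity constraint confines $u_{i,j,k}$ to $(-\infty,\lambda_1/2)$, where the cubic is strictly decreasing and so meets the positive level $h^6 f_{i,j,k}$ exactly once, while every other real root is trapped in $[\lambda_2/2,\lambda_3/2]$ and hence is larger. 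What your version buys is a uniform treatment of the one-real-root and three-real-root cases (the paper splits these) and an explicit handle on degenerate, repeated-eigenvalue configurations; for the one loose end you flag, strict monotonicity on $(-\infty,\lambda_1/2)$ holds in all cases since $g'(w)=-2\sum_{\ell}\prod_{m\ne\ell}(\lambda_m-2w)<0$ there, each product having two positive factors. What the paper's version buys is that it never needs to locate the roots relative to the spectrum of $Q(0)$ at all, only to compare the Hessians associated with different candidate roots.
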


\begin{proof}
Consider a fixed location $(x_i,y_j,z_k)$ in the grid.  Since $u_{i,j,k}$ satisfies the discrete \MA equation, it automatically satisfies the cubic equation $P_3(u_{i,j,k};u) = 0$ as well.  It remains to show that $u_{i,j,k}$ is the smallest real root of this cubic.

One possibility is that the cubic equation has only a single real root, in which case this must coincide with the real-valued $u_{i,j,k}$.

The other option is that the cubic equation has three real roots, $v_1 \leq v_2 \leq v_3$.  We remark that using the notation defined above, the discrete Hessian corresponding to each of these roots can be written as
\begin{equation}\label{eq:discreteHessian}
D^2(u^h;v_m) = \left(\begin{tabular}{ccc}$\frac{a^h-2v_m}{h^2}$ & $u_{xy}^h$  & $u_{xz}^h$\\ $u_{xy}^h$ & $\frac{b^h-2v_m}{h^2}$ & $u_{yz}^h$\\ $u_{xz}^h$ & $u_{yz}^h$ & $\frac{c^h-2v_m}{h^2}$\end{tabular}\right), \quad m = 1, 2, 3
\end{equation}
where the off-diagonal elements do not depend on the value of $v_m$.  This is a symmetric real-valued matrix, and therefore has real eigenvalues.

Now suppose that $\lambda$ is any eigenvalue of $D^2(u^h;v_1)$ {with eigenvector~$x$}.  Then we can compute
\[ D^2(u^h;v_m)x = \left(D^2(u^h;v_1)+\frac{2(v_1-v_m)}{h^2}I\right)x = \left(\lambda + \frac{2(v_1-v_m)}{h^2}\right)x. \]
Thus
\[ \lambda + \frac{2(v_1-v_m)}{h^2} \leq \lambda \]
is an eigenvalue of $D^2(u^h;v_m)$, with equality only if $v_1 = v_m$.  In particular, the eigenvalues of the discrete Hessian are decreasing functions of the root $v$.

Since by assumption, the discrete \MA equation has a root that yields a positive-definite Hessian, at least one of the roots $v_1,v_2,v_3$ will yield three positive eigenvalues.  As the eigenvalues are decreasing in $v$, the smallest root $v_1$ must yield three positive eigenvalues.

Suppose now that another root $v_m > v_1$ also produces positive eigenvalues.  Since both $v_1, v_m$ satisfy the discrete \MA equation at $(x_i,y_j,z_k)$, we have
\begin{align*} f_{i,j,k} &= \lambda_1(D^2(u^h;v_m)) \lambda_2(D^2(u^h;v_m)) \lambda_3(D^2(u^h;v_m))\\
 &< \lambda_1(D^2(u^h;v_1)) \lambda_2(D^2(u^h;v_1)) \lambda_3(D^2(u^h;v_1)) \\ &= f_{i,j,k}. \end{align*}  
This is a contradiction, which means that the smallest real root $v_1$ is the only root that yields three positive eigenvalues.  

We conclude that the smallest real root of the cubic is the root that corresponds to the convex solution of the discrete \MA equation~\eqref{eq:MA_discrete}.
\end{proof}

{
\begin{theorem}\label{thm:root2}
Let $u$ be a fixed point of~\eqref{realroots}.  Then $u$ is a solution of the discrete \MA equation~\eqref{eq:MA_discrete}.
\end{theorem}

\begin{proof}
Consider any fixed grid point $(x_i,y_j,z_k)$.  By the definition of the polynomial $P_3(\cdot;u^h)$, the fixed point $u_{i,j,k}$ satisfies $\det(D^2u^h_{i,j,k}) = f_{i,j,k}$.  It remains to show that the discrete Hessian $D^2u^h_{i,j,k}$ is positive definite.

Consider the symmetric real-valued matrix
\begin{equation}\label{eq:matrix}
M = \left(\begin{tabular}{ccc}$\frac{a^h}{h^2}$ & $u_{xy}^h$  & $u_{xz}^h$\\ $u_{xy}^h$ & $\frac{b^h}{h^2}$ & $u_{yz}^h$\\ $u_{xz}^h$ & $u_{yz}^h$ & $\frac{c^h}{h^2}$\end{tabular}\right), 
\end{equation}
which has three real eigenvalues $\lambda_1 \leq \lambda_2 \leq \lambda_3$.

We also define 
\begin{equation}\label{eq:discHessianFun}
D^2(u^h;v) = M - \frac{2v}{h^2} I ,
\end{equation}
which has eigenvalues $\lambda_m - \frac{2v}{h^2}$, $m = 1, 2, 3$.  We note that zeros of the polynomial $P_3(v;u^h)$ are equivalent to roots of
\[ Q(v) \equiv \det(D^2(u^h;v)) - f_{i,j,k} = \left(\lambda_1 - \frac{2v}{h^2}\right)\left(\lambda_2 - \frac{2v}{h^2}\right)\left(\lambda_3 - \frac{2v}{h^2}\right) - f_{i,j,k} = 0. \]

We record the fact that
\begin{align*}
Q\left(\frac{h^2}{2}\lambda_1\right) = - f_{i,j,k} < 0, \quad
Q\left(\frac{h^2}{2}\left(\lambda_1 - f_{i,j,k}^{1/3}\right)\right) \geq 0.
\end{align*}
By the Intermediate Value Theorem, there exists some real root~$v^* \in \left[\frac{h^2}{2}\left(\lambda_1 - f_{i,j,k}^{1/3}\right),\frac{h^2}{2}\lambda_1\right)$.  By definition, $u_{i,j,k}$ is the smallest real root and thus
\[ u_{i,j,k} \leq v^* < \frac{h^2}{2}\lambda_1. \]

Then the eigenvalues of the discrete Hessian $D^2(u^h;u_{i,j,k})$ are given by
\[ \lambda_m - \frac{2u_{i,j,k}}{h^2} > \lambda_m - \lambda_1 \geq 0, \quad m = 1,2,3 \]
and the discrete Hessian is positive definite.

We conclude that all fixed points correspond to solutions of the discrete \MA equation.
\end{proof}
}

\section{FAS multigrid method}
\label{secFAS}

The drawback of directly using the nonlinear Gauss-Seidel iteration is that the number of iterations required for convergence increases with the number of discretization points, so the total solution time grows super-linearly with the number of variables.
However,  the nonlinear Gauss-Seidel iteration can be used as an effective smoother, which makes it particularly effective when combined with the nonlinear multigrid method that follows.

In this section, we introduce the standard nonlinear FAS multigrid method 
for solving our discretized nonlinear problem. As a highly efficient iterative algorithm,
the FAS multigrid method is a nonlinear generalization of the linear multigrid algorithm, 
which typically has optimal computational complexity for linear elliptic PDEs.
The FAS multigrid method provides a powerful approach for handling nonlinear equations
without the need for the global linearization required by Newton's method.
Unlike with Newton's method, there is typically no need to initialize the solver with a very good initial guess. Below, we briefly describe the FAS multigrid algorithm as well as its FMG version
based on the standard textbooks~\cite{Brandt2011multigrid,Briggs2000,Trottenberg2001,Saad2003}.

For a general nonlinear system that is discretized using the fine mesh-size $h$
\[
 S_h (w_h)=b_h,
\]
one V-cycle FAS multigrid iteration is recursively defined in {Algorithm 1}~\cite{Briggs2000,Trottenberg2001,Saad2003}.
\begin{center}
\begin{tabular}{r|ll}
\textbf{Algorithm 1}&FAS multigrid V-cycle iteration&(with $H=2h$) \\ \hline
Steps& $w_h :=\textsf{FAS}(h,S_h,w_h^0,b_h)$\\ \hline
--& IF ($h==h_0$)& \\
 (1)& \hspace{1cm} Approximately solve: & $ S_{h_0} (w _{h_0})=b_{h_0}$\\
--& ELSE & \\
 (2)&\hspace{1cm} Pre-smooth $\nu_1$ times: & $w_h:=\textsc{smooth}^{\nu_1} (S_h, w_h^0,b_h)$ \\
 (3)&\hspace{1cm} Restriction residual:& $r_H:=I^H_h (b_h-S_h(w_h))$\\
 (4)&\hspace{1cm} Initialize coarse guess:&$u _H:= I^H_h w_h$, $w_H:=\tilde I^H_h w_h$\\
 (5)&\hspace{1cm} Define coarse r.h.s.:&  $b_H:=S_H(w_H)+r_H$\\
 (6)& \hspace{1cm} Recursion:& $u _H:=\textsf{FAS}(H,S_H,u_H,b_H)$\\
 (7)& \hspace{1cm} Prolongation: & $\delta_h:=I_H^h (u_H-w_H)$ \\
 (8)& \hspace{1cm} Correction: & $w_h:=w_h+\delta_h$\\
 (9)& \hspace{1cm} Post-smooth $\nu_2$ times:& $w_h:=\textsc{smooth}^{\nu_2} (S_h, w_h,b_h)$\\
 --& ENDIF &\\
 (10)& RETURN $w_h$. &\\ \hline
\end{tabular}
\end{center}

In a single FAS V-cycle iteration, the fine-grid solution first undergoes a small number of smoothing iterations.  Following this, the solution and residual are restricted to a coarser grid ($H = 2h$) using two (possibly different) restriction operators($I^H_h$, $\tilde I^H_h$).  A new V-cycle iteration is then performed on this coarser level, with this procedure proceeding recursively until the grid reaches the coarsest level $h_0\gg h$.
At the coarsest level, the underlying problem size has become so small that it can be (approximately) solved using a very small number of Gauss-Seidel iterations.

Computing the solution on the coarser level $H$ leads to a coarse approximation of the solution.  
A prolongation operator ($I^h_H$) transfers this approximation to 
the fine grid, which provides a coarse grid correction in the fine grid solution.  The fine grid solution is further corrected using a small number of 
smoothing iterations. 


As suggested in~\cite{Trottenberg2001},
the approximation restriction operator $\tilde I^H_h$ is chosen as straight injection.
Depending on the application, there are many possible choices for $I^H_h$ and $I_H^h$.
For a better computational efficiency,
we choose to use the residual restriction operator $I^H_h$ 
from half-weighting averaging~{\cite{Trottenberg2001}} with {the following} stencil form 
$$I^H_h=
 \frac{1}{12}\bmt
 \bmt 
 0&0&0\\
 0&1&0\\
 0&0&0\\
 \emt_h^H
 \bmt 
 0&1&0\\
 1&6&1\\
 0&1&0\\
 \emt_h^H
 \bmt 
 0&0&0\\
 0&1&0\\
 0&0&0\\
 \emt_h^H
 \emt
$$
and the prolongation operator $I^h_H$ from trilinear interpolation~{\cite{Trottenberg2001}} with {a corresponding} stencil form
$$I^h_{H}=\frac{1}{64}
 \left] \begin{array}{ccc}
\left] \begin{array}{ccc}
1 & 2 & 1 \\
2 & 4 &2 \\
1 & 2 & 1
\end{array} \right[_{H}^h
\left] \begin{array}{ccc}
2 & 4 & 2 \\
4& 8  & 4 \\
2 & 4 & 2
\end{array} \right[_{H}^h
\left] \begin{array}{ccc}
1 & 2 & 1 \\
2 & 4 &2 \\
1 & 2 & 1
\end{array} \right[_{H}^h
\end{array} \right[
$$
Other types of $I^H_h$ and $I_H^h$ are also available when better accuracy is required.

We also note that a straightforward definition of the coarse grid operator $S_H$ is possible by simply using the discretized equations with a coarser step size $H$.
This is the most common choice for a fully structured mesh.
We point out that the so-called Galerkin coarse grid operator,
which constitutes a core component of the popular linear algebraic multigrid,
is not easily used with the nonlinear FAS multigrid framework~\cite{Trottenberg2001}.

\begin{figure}[h!]\centering{
 \includegraphics[width=1\textwidth]{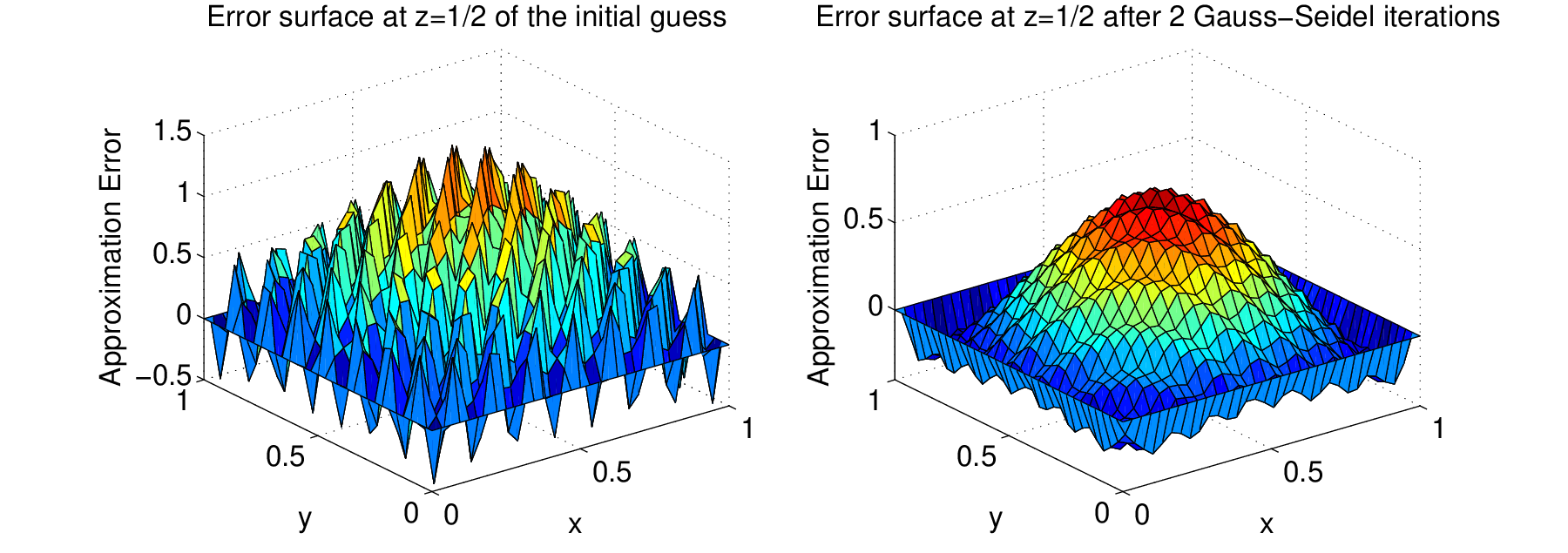}}
\caption{The smoothing effect of our nonlinear Gauss-Seidel iterations with Example 1. Left: the initial oscillating
error is chosen to be $\sin(\pi x)\sin(\pi y)\sin(\pi z)+0.5\sin(15\pi x)\sin(15\pi y)\sin(15\pi z)$
with a maximum norm $1.4712$. Right: after 2 Gauss-Seidel iterations, 
the approximation error becomes smooth with a maximum norm $0.7538$. 
Both error surface plots are only snapshots at $z=1/2$ for better visualization.
}
\label{smoothing}    
\end{figure}

The last but most crucial component of the method is an effective smoother \textsc{smooth}, 
whose major function is to eliminate
the high-frequency components of the approximation errors. 
As a standalone solver, the chosen smoothing iteration may converge very slowly as the mesh refines.
This is the case for the nonlinear Gauss-Seidel iteration developed in the previous section.  However, because it damps the high frequency components of the error, it
will serve as an effective smoother \textsc{smooth} in Algorithm~1. 
{Such a smoothing property is numerically illustrated in Figure \ref{smoothing},
which shows that two iterations seem to be sufficient to dramatically smooth out high-frequency errors.}
This is unsurprising given that the classical linear Gauss-Seidel iteration 
has been widely recognized as a benchmark smoother in the linear multigrid method.
In practice, one or two pre- and post-smoothing iterations typically give
the best overall performance. More pre- and post-smoothing iterations could be helpful
when the smoothing effect of the smoother is weak or degraded by the possible singular solutions.

The efficiency of the above FAS multigrid iterative solver can be further improved when its initial guess
is derived from the idea of nested iteration, which leads to the most efficient full multigrid (FMG)
algorithm~\cite{Briggs2000,Trottenberg2001,Saad2003}.
The FMG algorithm based on above the FAS V-cycle multigrid iteration is recursively defined in {Algorithm 2}.
\begin{center}
 \begin{tabular}{r|ll}
\textbf{Algorithm 2}& FMG-FAS algorithm & (with $H=2h$) \\ \hline
Steps& $w_h :=\textsf{FMG}(h,S_h,b_h)$\\ \hline
--& IF ($h==h_0$)& \\
 (1)& \hspace{1cm} Approximately solve: & $ S_{h_0} (w _{h_0})=b_{h_0}$\\
--& ELSE & \\
 (2)& \hspace{1cm} Restriction:& $b_H :=\tilde I_h^H b_h$\\
 (3)& \hspace{1cm} Recursion:& $w_H :=\textsf{FMG}(H,S_H,b_H)$\\
 (4)& \hspace{1cm} Cubic interpolation: & $w_h:=\hat I_H^h w_H$\\
 (5)& \hspace{1cm} FAS V-cycle iteration:& $w_h :=\textsf{FAS}(h,S_h,w_h,b_h)$\\
  --& ENDIF &\\
 (6)& RETURN $w_h$. &\\ \hline
\end{tabular}
\end{center}

In Algorithm 2, no initial guess is required at the finest level $h$, but
we do need an initial guess at the coarsest level $h_0$ if we are going to 
solve the coarsest nonlinear system $ S_{h_0} (w _{h_0})=b_{h_0}$ using our Gauss-Seidel iteration. 
Fortunately, the coarsest nonlinear system has a very small dimension with effectively one unknown on a $3\times3\times3$ grid, the remaining values being determined by the Dirichlet boundary conditions.
Thus it is easy to obtain a good approximate solution no matter what initial guess is used.
In our implementation, we simply take the initial guess at the  coarsest level $h_0$ to be identically zero.
Often, a cubic interpolation $\hat I_H^h$ will be used in the above FMG algorithm so that
one full FMG iteration can deliver an approximation with the desired discretization accuracy.  This is useful
if the underlying FAS V-cycle multigrid iteration converges
satisfactorily, as is the case for Poisson equation solvers~\cite{Trottenberg2001}. 
Due to the strong nonlinearity of our problem,
we do not expect one FMG sweep to be sufficient. 
However, it does provide a very good initial guess for the FAS V-cycle multigrid iterations.
In both algorithms, the coarse level problem is solved approximately by performing just one nonlinear Gauss-Seidel iteration.

\section{Numerical examples}
\label{secNum}
In this section, we test our FAS multigrid algorithm using several examples available in the literature.
All simulations are implemented using MATLAB on a laptop PC
with Intel Core  i3-3120M CPU@2.50GHz and 12GB RAM.
{The CPU time (in seconds) is estimated by MATLAB's built-in timing functions \textit{tic/toc}.}
We apply one FMG iteration as an initialization step. 
Due to the nonlinearity,
this initialization step is necessary for ensuring mesh-independent convergence of the FAS algorithm.
For each FAS V-cycle, we perform only two pre-smoothing and two post-smoothing Gauss-Seidel iterations,
The coarsest level ($h_0=1/2$) problem is approximately solved using only one Gauss-Seidel iteration.
Numerical simulations indicate that the FMG-FAS algorithm has almost the same efficiency using more accurate solvers at the coarsest level. 

At the $l$-th iteration, let $u^{l}_h$ denote the computed approximation of solution and $r^l_h$ be the corresponding residual vector.
We use the pre-specified reduction in the relative residuals as the stopping criterion 
\[
  \frac{\|r_h^l\|_{2}}{\|r_h^0\|_{2}} \le 10^{-6},
\] 
where $\|\cdot\|_{2}$ denotes the discrete $L_2$ norm.
Our numerical simulations show that the chosen tolerance $10^{-6}$ is sufficient to achieve the level of discretization error.
Let $u_h$ denote the computed solution approximated at the last iteration.
We first compute the infinity norm of the approximation error
$$\textrm{Error}(h)=\|u_h-u\|_{\infty}$$
and then estimate the experimental order of accuracy by computing the logarithmic ratios of the approximation errors 
between two successive refined meshes, i.e.,
\[
 \mbox{Order}=\log_2\left(\frac{{\textrm{Error}(2h)}}{{\textrm{Error}(h)}}\right),
\]
which should be close to two provided that the scheme delivers a second-order accuracy.

In most cases, only one FMG iteration is  needed for convergence.
In special cases, we may need a few extra multigrid V-cycle iterations to fulfill the above convergence criterion.
In 3D problems, using two pre- and post-smoothing steps,
the computational cost of one FAS V-cycle is about $32/7$ the cost of a single Gauss-Seidel iteration on the finest mesh. 
One FMG-FAS iteration costs roughly $5$ times the cost of a single Gauss-Seidel iteration on the finest mesh
if we ignore the additional cost of restriction and interpolation.

When we use the nonlinear Gauss-Seidel iterations as a standalone solver, it is challenging 
to pick a good initial guess such that the iteration converges quickly. 
For the purpose of simple comparison, we always choose the slightly perturbed solution $1.01 u $
as the initial guess for the nonlinear Gauss-Seidel solver, where $u$ is  the analytic solution. 
Even using this impractical initial guess, the Gauss-Seidel solver cannot compete with the efficiency of the FMG-FAS method. {The 3D numerical comparison between the standalone Gauss-Seidel solver and the FMG-FAS method is given
in subsection \ref{Ex3D}. }
\subsection{Two dimensional examples}
\label{Ex2D}
 We begin by applying the corresponding 2D version of our FMG-FAS method
 using the nonlinear Gauss-Seidel iteration developed in~\cite{Benamou2010} as the smoother.
 {The scheme involves a 9-point stencil, and we test the Gauss-Seidel smoother using the red-black ordering and the plain lexicographic ordering, respectively.}
The FMG-FAS multigrid method delivers a far better computational efficiency then the results of~\cite{Benamou2010}, which used the Gauss-Seidel iteration as a standalone solver.
{In particular, the computation time scales almost linearly with respect to the degrees of freedom,
which is verified by a consistent fourfold increase in the CPU time {(in seconds)} as the step-size $h$ is halved.}

 \textbf{Example 1.}
  Let $\Omega=(0,1)^2$. Choose 
 \[
  f(x,y)=(1+x^2+y^2)\exp(x^2+y^2)
 \]
such that an analytic solution is
$$u(x,y)=\exp(\frac{x^2+y^2}{2}).$$

 \textbf{Example 2.}
 Let $\Omega=(0,1)^2$. Choose 
 \[
  f(x,y)=\frac{1}{\sqrt{x^2+y^2}}
 \]
such that an exact solution is
$$u(x,y)=\frac{2\sqrt{2}}{3}(x^2+y^2)^{3/4}.$$

 \textbf{Example 3.}
Let $\Omega=(0,1)^2$. Choose
\[
 f(x,y)=\frac{2}{(2-x^2-y^2)^2}
\]
such that an exact solution is
$$u(x,y)=-\sqrt{2-x^2-y^2}.$$

\begin{table}[!h]
\centering \small
\caption{Results for Example 1, 2, and 3 with the FMG-FAS algorithm (red-black ordering).}
\begin{tabular}{|c|cccc|cccc|cccc|}
\hline
 & \multicolumn{4}{c|}{Example 1}&\multicolumn{4}{c|}{Example 2}&\multicolumn{4}{c|}{Example 3} \\ \hline
$N$	&	Error & Order& Iter & CPU ($s$) &Error & Order&Iter &CPU ($s$) &Error & Order&Iter &CPU ($s$) 
\\
\hline 
128&		 4.5e-06 &--&	1& 0.1&            3.8e-05 &--&	1& 0.1&        8.9e-03 &--&	2& 0.1\\
256&		 1.1e-06 &2.0&	1& 0.1&            1.3e-05 &1.5&	1& 0.1&        6.4e-03 &0.5&	2& 0.2\\
512&		 2.8e-07 &2.0&	1& 0.3&            4.7e-06 &1.5&	1& 0.3&        4.5e-03 &0.5&	1& 0.3\\
1024&		 7.0e-08 &2.0&	1& 1.0&            1.7e-06 &1.5&	1& 1.1&        3.2e-03 &0.5&	1& 1.1\\
2048&		 1.7e-08 &2.0&	1& 3.6&            5.9e-07 &1.5&	1& 4.0&        2.3e-03 &0.5&	1& 3.8\\
4096&		 4.3e-09 &2.0&	1& 16.2&           2.1e-07 &1.5&	1& 16.5&       1.6e-03 &0.5&	1& 18.0\\
8192&		 1.0e-09 &2.1&	1& 67.1&           7.4e-08 &1.5&	1& 69.1&       1.1e-03 &0.5&	1& 70.3\\
 \hline
\end{tabular}
\label{Ex_T1_2dA}
 \end{table}

\begin{table}[!h]
\centering \small
\caption{Results for Example 1, 2, and 3 with the FMG-FAS algorithm (lexicographic ordering).}
\begin{tabular}{|c|cccc|cccc|cccc|}
\hline
 & \multicolumn{4}{c|}{Example 1}&\multicolumn{4}{c|}{Example 2}&\multicolumn{4}{c|}{Example 3} \\ \hline
$N$	&	Error & Order& Iter & CPU ($s$)  &Error & Order&Iter &CPU ($s$) &Error & Order&Iter &CPU ($s$) 
\\
\hline 
128&		 5.1e-06 &--&	1& 0.1&         4.3e-05 &--&	1& 0.1&                     8.9e-03 &--&	3& 0.1\\
256&		 1.3e-06 &2.0&	1& 0.2&         1.5e-05 &1.5&	1& 0.3&                     6.3e-03 &0.5&	2& 0.2\\
512&		 3.3e-07 &2.0&	1& 0.8&         5.4e-06 &1.5&	1& 0.9&                     4.4e-03 &0.5&	1& 0.8\\
1024&		 8.2e-08 &2.0&	1& 3.2&         1.9e-06 &1.5&	1& 3.27&                     3.2e-03 &0.5&	1& 3.1\\
2048&		 2.1e-08 &2.0&	1& 12.7&        6.8e-07 &1.5&	1& 12.9&                    2.2e-03 &0.5&	1& 12.6\\
4096&		 5.1e-09 &2.0&	1& 53.2&        2.4e-07 &1.5&	1& 54.6&                    1.6e-03 &0.5&	1& 53.5\\
8192&		 1.3e-09 &2.0&	1& 245.6&       8.5e-08 &1.5&	1& 259.4&                   1.1e-03 &0.5&	1& 250.7\\
 \hline
\end{tabular}
\label{Ex_T1_2dB}
 \end{table}
 
 For simplicity, we combine the results of Examples 1, 2, and 3  in Tables \ref{Ex_T1_2dA} and \ref{Ex_T1_2dB}.
 The CPU times manifest the desired linear time complexity of our FMG-FAS algorithm.
 The non-smoothness of the solution in Examples~2 and~3 degrade the second-order accuracy of the approximations to  $O(h^{3/2})$ and $O(h^{1/2})$,  respectively.
 Our algorithm seems to be able to effectively and efficiently handle this type of mildly singular solution. 
 {It also shows that the red-black ordering has slightly better approximation accuracy as
 well as superior computational efficiency due to the faster vectorization in  MATLAB implementation.}

 {Although we notice in our numerical experiments that only one iteration is sufficient to obtain the same approximation errors in infinity norm, for singular solutions (not even in $H^2(\Omega)$), additional FAS V-cycle iterations may be needed (e.g. in Example 3) to fulfill the given stopping criterion
 based on residual norm reduction in discrete $L_2$ norm.
  Nevertheless, in all non-degenerate ($f>0$) examples we considered, we recovered linear computational complexity as the mesh was refined.  In the fully degenerate setting ($f=0$), the ellipticity of the PDE breaks down, and the multigrid approach may become less effective in accelerating the convergence of the Gauss-Seidel iteration.}
 In that case, the deterioration of our algorithm is associated with the possible failure of the
 corresponding finite difference approximations, which are not provably convergent, and can break down in the singular case.

\subsection{Three dimensional examples}
\label{Ex3D}
Next we turn our attention to numerically solving the three-dimensional \MA equation using the Gauss-Seidel iteration
and the FMG-FAS method developed in this paper.

\textbf{Example 4~\cite{Froese2011}.}
Let $\Omega=(0,1)^3$. Choose 
\[
 f =(1+x^2+y^2+z^2)\exp\left(\frac{3(x^2+y^2+z^2)}{2}\right)
\]
such that the exact solution is
$$u =\exp(\frac{x^2+y^2+z^2}{2}).$$
In Table \ref{Ex1_T1_3d} we report the relative residual norms, infinity norm of errors, iteration numbers,  
and CPU times of the FMG-FAS algorithm and the Gauss-Seidel solver, respectively.
The `Error' column and `Order' column show that the central finite difference discretization 
indeed achieves the expected second-order accuracy in the maximum norm.
The `Iter' column implies that our FMG-FAS algorithm possesses a mesh-independent convergence.
More specifically, here `Iter'=1 implies that only one FMG iteration
is {sufficient to fulfill our stopping criteria}; no futher V-cycles are necessary.
{The achieved relative residual norms (column 'RelRes') turns out to be much lower,
thanks to the fast convergence of the FMG algorithm.}
We can do a simple calculation to see that one FMG iteration indeed costs about 5 Gauss-Seidel iterations. 
Consider the mesh size $N=32$, one Gauss-Seidel iteration takes about $324.6/867\approx 0.4$ second
and hence one FMG iteration should require approximately $5\times 0.4=2.0$ seconds, which
is almost exactly what we achieved in our algorithm.
The `CPU' column indicates that our FMG-FAS algorithm has a roughly linear time complexity
since the CPU time increases by eight times as the mesh size $h=1/N$ is halved.

On the other hand, the nonlinear Gauss-Seidel iterations as a standalone solver requires
an increasing number of iterations as the mesh size decreases, which is expected.
Even starting with an artificial initial guess that is impractically close the desired true solution, 
the nonlinear Gauss-Seidel iteration still does not produce sufficiently accurate approximations using the same stopping criterion
 (compare the row with $N=32$).
It is also interesting to notice that the relative residuals (column `RelRes') of the Gauss-Seidel solver are much larger
than that of the FMG-FAS algorithm.

For comparison, Table \ref{Ex1_T2_3d} also lists the results reported in~\cite{ObermanFroeseFast} and~\cite{Brenner2012},
where a wide-stencil hybrid finite difference {(FD)} solver and two finite element {(FE)} approximations were used
for the same test problem.
{We mention that the results in~\cite{Brenner2012} made use of a Newton solver, which required a good initial guess that was obtained by the vanishing moment method~\cite{Feng2009}.}
Though we can not fairly compare the CPU times directly
as they were computed on different machines, it is clear that the FMG-FAS method is the only one {that} scales linearly with the number of degrees of freedom.
The accuracy of the FMG-FAS approach is also competitive.
As a whole, our FMG-FAS method appears superior than both the wide-stencil finite difference solvers 
and the finite element approximations  for this type of simple problem with  a sufficiently smooth solution.

We also mention that when $N=128$, the second order accuracy is no longer observed.  This is due to the limitations of machine precision and round-off errors during the computation.  We note that the discretized \MA operator involves division by $h^6 \approx 2\times10^{-13}$ when $N = 128$.  We could get around this using higher precision arithmetic.  However, the global error is now $6\times10^{-6}$,  which is small.  The key is that we are able to resolve the data, which necessitates solving on large grids, and the resulting accuracy is sufficient for the applications we have in mind.

\begin{table}[!h]
\centering
\caption{Results for Example 4 with the FMG-FAS algorithm and the Gauss-Seidel solver.}
\begin{tabular}{|cc|ccccc|ccccc|}
\hline
 && \multicolumn{5}{c|}{FMG-FAS}&\multicolumn{5}{c|}{Gauss-Seidel} \\ \hline
$N$	&{DOFs}& RelRes &	Error & Order& Iter & CPU ($s$)  & RelRes &	Error & Order&Iter &CPU ($s$)  \\
\hline
8& 216&	3.1e-07 &	 1.2e-03&	 --& 1  & 0.1 &	9.3e-07 &	 1.2e-03&	 --& 72  & 0.4\\
16&2744&	9.5e-09 &	 2.7e-04&	 2.1& 1  & 0.2 &	9.9e-07 &	 3.0e-04&	 2.0& 270  & 11.5\\
32&27000&	2.1e-10 &	 5.7e-05&	 2.3& 1  & 2.0 &	1.0e-06 &	 8.1e-05&	 1.9& 867  & 324.6\\
64&238328&	4.1e-12 &	 1.4e-05&	 2.0& 1  & 16.9&&&&&\\
128&2000376&	5.2e-13 &	 6.2e-06&	 1.2& 1  & 136.9&&&&&\\
 \hline
\end{tabular}
\label{Ex1_T1_3d}
 \end{table}
 
\begin{table}[!h]
\centering
\caption{{Reported results for Example 4 from~\cite{ObermanFroeseFast} and~\cite{Brenner2012}}.}
\begin{tabular}{|cccc|cccc|ccc|}
\hline
 \multicolumn{4}{|c|}{Wide-stencil FD~\cite{ObermanFroeseFast}}&\multicolumn{4}{c|}{FE (quadratic) ~\cite{Brenner2012}}&\multicolumn{3}{c|}{{FE (cubic) ~\cite{Brenner2012}}} \\ \hline
$N$	& DOFs &Error &  CPU  ($s$) & $h$	& DOFs &Error &  CPU ($s$)  &DOFs &Error &CPU ($s$) \\
\hline
 8        &     343         &          1.51e-02       &     0.04&1/4     &      1581         &       9.12e-03       &         3.89     &4952 &4.14e-04 &28.78       \\
 12      &      1331       &           1.40e-02      &     0.10&1/8     &       12611       &        2.29e-03      &         38.55  &40985& 5.71e-05& 140.52         \\
 16      &      3375       &           1.29e-02      &     0.71&1/12    &       42798       &        4.49e-04      &         140.89& 140861& 7.52e-06 &874.57         \\
22      &      9261       &           1.21e-02      &     6.72&1/16    &       99436       &        2.73e-04      &         355.78  & 329244& 6.36e-06& 2758.98        \\
32      &      29791      &           1.11e-02      &     86.63&1/20    &       195110      &        1.21e-04      &         803.47  &&&       \\
 \hline
\end{tabular}
\label{Ex1_T2_3d}
 \end{table}

\textbf{Example 5.}
Let $\Omega=(0,\pi)^3$. Choose
\[
 f =(\sin(x) + 1)(\sin(y) + 1)(\sin(z) + 1)
\]
such that an exact solution is
$$u =-\sin(x)-\sin(y)-\sin(z)+(x^2+y^2+z^2)/2.$$
In Table~\ref{Ex2_T1_3d} we report the relative residual norms, infinity norm of errors, iteration numbers,  
and CPU times of the FMG-FAS algorithm and the Gauss-Seidel solver, respectively.
Again, we observed the similar excellent performance of our FMG-FAS algorithm against with the nonlinear Gauss-Seidel 
solver.
\begin{table}[!h]
\centering
\caption{Results for Example 5 with the FMG-FAS algorithm and the Gauss-Seidel solver.}
\begin{tabular}{|c|ccccc|ccccc|}
\hline
 & \multicolumn{5}{c|}{FMG-FAS}&\multicolumn{5}{c|}{Gauss-Seidel} \\ \hline
$N$	& RelRes &	Error & Order& Iter & CPU ($s$)  & RelRes &	Error & Order&Iter &CPU ($s$) \\
\hline
8&	6.8e-08 &	 1.7e-02&	 --& 2  & 0.1 &	9.2e-07 &	 1.7e-02&	 --& 82  & 0.4\\
16&	2.1e-08 &	 3.3e-03&	 2.4& 1  & 0.2 &	9.8e-07 &	 4.3e-03&	 2.0& 296  & 12.9\\
32&	2.7e-10 &	 8.1e-04&	 2.0& 1  & 2.0 &	1.0e-06 &	 1.1e-03&	 2.0& 954  & 356.2\\
64&	3.7e-12 &	 2.1e-04&	 2.0& 1  & 16.4&&&&&\\
128&	4.8e-13 &	 5.2e-05&	 2.0& 1  & 134.1&&&&&\\
 \hline
\end{tabular}
\label{Ex2_T1_3d}
 \end{table}

\textbf{Example 6~\cite{Brenner2012}.}
Let $\Omega=(0,1)^3$. Choose
\[
 f =(1/16)(x^2 + y^2 + z^2)^{-3/4}
\]
such that an analytic solution is
$$u =(1/3)(x^2+y^2+z^2)^{3/4}.
$$
In Table \ref{Ex3_T1_3d} we report the relative residual norms, infinity norm of errors, iteration numbers,  
and CPU times of the FMG-FAS algorithm and the Gauss-Seidel solver, respectively.
In constrast to the previous two examples, the solution to this problem 
is not smooth as there is a singularity at the origin.
Nevertheless, our central finite difference scheme still approximates the solutions
with a reasonable $O(h^{3/2})$ accuracy in the infinity norm.
Similarly, our FMG-FAS algorithm greatly outperforms the nonlinear Gauss-Seidel solver.
\begin{table}[!h]
\centering
\caption{Results for Example 6 with the FMG-FAS algorithm and the Gauss-Seidel solver.}
\begin{tabular}{|c|ccccc|ccccc|}
\hline
 & \multicolumn{5}{c|}{FMG-FAS}&\multicolumn{5}{c|}{Gauss-Seidel} \\ \hline
$N$	& RelRes &	Error & Order& Iter & CPU ($s$)  & RelRes &	Error & Order&Iter &CPU ($s$) \\
\hline
 8&	2.7e-07 &	 5.3e-04&	 --& 1  & 0.1 &	9.7e-07 &	 4.0e-04&	 --& 86  & 0.4\\
    16&	1.1e-08 &	 2.1e-04&	 1.4& 1  & 0.2 &	9.9e-07 &	 1.4e-04&	 1.5& 312  & 13.2\\
    32&	4.8e-10 &	 7.4e-05&	 1.5& 1  & 2.0 &	9.9e-07 &	 5.1e-05&	 1.5& 925  & 345.8\\
    64&	2.0e-11 &	 2.6e-05&	 1.5& 1  & 16.4&&&&&\\
    128&	1.2e-12 &	 9.3e-06&	 1.5& 1  & 133.9&&&&&\\
 \hline
\end{tabular}
\label{Ex3_T1_3d}
 \end{table}
 
 \textbf{Example 7~\cite{Froese2011}.}
Let $\Omega=(0,1)^3$. Choose 
\[
 f(x,y)={3}{(3-x^2-y^2-z^2)^{-5/2}}
\]
such that an analytic solution is
$$u =-\sqrt{3-x^2-y^2-z^2}.$$
In Table \ref{Ex4_T1_3d} we report the relative residual norms, infinity norm of errors, iteration numbers,  
and CPU times of the FMG-FAS algorithm and the Gauss-Seidel solver, respectively.
This example is more challenging due to the unbounded gradient at the boundary.
The `Error' column shows our scheme has a roughly $O(h^{1/2})$ accuracy in the infinity norm, which is consistent with the error in standard discretizations for the two-dimensional version of this example~\cite{Benamou2010}.
Obviously, our FMG-FAS algorithm requires significantly less CPU time than the Gauss-Seidel solver.
{We note that the slightly better accuracy achieved by the Gauss-Seidel solver is due to the unrealistic initial guess used.}
 \begin{table}[!h]
\centering
\caption{Results for Example 7 with the FMG-FAS algorithm and the Gauss-Seidel solver.}
\begin{tabular}{|c|ccccc|ccccc|}
\hline
 & \multicolumn{5}{c|}{FMG-FAS}&\multicolumn{5}{c|}{Gauss-Seidel} \\ \hline
$N$	& RelRes &	Error & Order& Iter & CPU  ($s$) & RelRes &	Error & Order&Iter &CPU ($s$) \\
\hline
  8&	1.7e-07 &	 1.4e-03&	 --& 1  & 0.1 &	8.9e-07 &	 1.4e-03&	 --& 75  & 0.4\\
   16&	2.1e-08 &	 1.4e-03&	 0.0& 1  & 0.2 &	9.7e-07 &	 1.4e-03&	 0.0& 266  & 12.2\\
   32&	2.7e-08 &	 4.3e-03&	 -1.7& 1  & 2.0 &	9.9e-07 &	 1.1e-03&	 0.3& 854  & 320.2\\
   64&	4.9e-09 &	 2.2e-03&	 1.0& 1  & 16.3 &&&&&\\
   128&	1.7e-09 &	 1.6e-03&	 0.4& 1  & 133.1&&&&&\\
 \hline
\end{tabular}
\label{Ex4_T1_3d}
 \end{table}

\section{Conclusions} 
\label{secFinal}
In this paper, we presented a fast method for solving the three-dimensional elliptic Monge-Amp\`ere equation, focusing on the Dirichlet problem and smooth solutions.  
We combined a nonlinear Gauss-Seidel iterative method with a standard centered difference discretization.
The  Gauss-Seidel iteration was then used  as an effective smoother, in combination with  a nonlinear full approximation scheme (FAS) multigrid method.
Two and three dimensional numerical examples were computed to demonstrate the computational efficiency of the proposed multigrid method, and comparison was made with other available approaches.
{In particular, we show that the computational cost of the method scales approximately linearly.  }
{Computations on a recent laptop allowed for grid sizes of $128^3$ in less than three minutes.  More importantly, since the methods scale well, implementation on more powerful computers will lead to a feasible approach to solving problems with the larger resolutions that occur in, for example three dimensional image registration} 

Future work will be to extend these ideas to filtered almost-monotone finite differences schemes in order to build solvers that apply to the singular solutions that can occur in applications.   We would also like to extend the method to the Optimal Transportation problem with applications to three dimensional image registration, or to parameter identification.

%

\section*{Acknowledgments}
The authors would like to thank the two anonymous referees for their valuable comments and suggestions that 
have greatly contributed to improving the original version of this manuscript.
The first author gratefully acknowledges the support and hospitality provided by the IMA during his participation 
in the IMA's New Directions Short Course on ``Topics on Control Theory'', which took place from May 27 to June 13, 2014.


\end{document}